\newcommand{\B}{\mathcal{B}}
\newcommand{\calE}{\mathcal{E}}
\def\f{\frac}
\newcommand{\C}{\mathbb{C}}
\newcommand{\R}{\mathbb{R}}
\newcommand{\al}{\alpha}
\newcommand{\de}{\delta}
\newcommand{\om}{\omega}
\newcommand{\la}{\lambda}
\newcommand{\s}{\sigma}
\newcommand{\La}{\Lambda}
\newcommand{\I}{\infty}
\newcommand{\EQ}[1]{\begin{equation}\begin{split} #1 \end{split}\end{equation}}
\newcommand{\Del}[1]{}
\def\calT{\mathcal{T}}
\newtheorem{thm}{Theorem}
\newtheorem{cor}[thm]{Corollary}
\newtheorem{lem}{Lemma}
\newtheorem{prop}[thm]{Proposition}
\theoremstyle{remark}
\def\diag{\mathrm{diag}}
\def\eps{\varepsilon}
\def\nn{\nonumber}
\newcommand{\calS}{\mathcal{S}}
\def\lan{\langle}
\def\ran{\rangle}
\def\tor{\mathbb{T}}
\def\calA{\mathcal{A}}
\def\de{\delta}
\def\calN{\mathcal{N}}
\def\Nbar{\overline{N}}
\def\met{\mathfrak{d}}
\def\Erw{\mathbb{E}}
\def\Prob{\mathbb{P}}
\begin{document}

\title[Regularity and convergence rates for Lyapunov Exponents]{Regularity and convergence rates for the Lyapunov exponents of linear co-cycles}

\author{W.\ Schlag}


\subjclass{37D25}

\keywords{Multiplicative ergodic theorem, shift dynamics, Lyapunov exponents}

\thanks{Support of the National Science Foundation  DMS-0617854, DMS-1160817  is gratefully acknowledged. The author thanks Michael Goldstein for
his comments on a preliminary version of this paper. } 

\begin{abstract}
We consider co-cycles $\tilde A: \tor\times K^d \in (x,v)\mapsto ( x+\om, A(x,E)v)$ with $\om$ Diophantine, $K=\R$ or $K=\C$.
We assume that $A:\tor\times \calE\to GL(d,K)$ is continuous,  depends analytically on~$x\in\tor$ and is H\"older in~$E\in \calE$, where $\calE$ is a
compact metric space. 
It is shown that if all Lyapunov
exponents are distinct at one point~$E_{0}\in\calE$, then they remain distinct near~$E$. Moreover,  they depend in a H\"older fashion on~$E\in B$
for any ball $B\subset \calE$ where they are distinct.  Similar results, with a weaker modulus of continuity, hold for higher-dimensional tori~$\tor^\nu$ with a Diophantine
shift.  We also derive optimal statements about the rate of convergence of the finite-scale Lyapunov  exponents to their infinite-scale counterparts. 
A key ingredient in our arguments is the {\em Avalanche Principle}, a deterministic statement about long finite products of invertible matrices, which goes back to work of Michael Goldstein and the author. We also discuss applications of our techniques to products of random matrices.
\end{abstract}

\maketitle

\section{Introduction}\label{sec:intro}

Let $(X,\Sigma,m;\theta)$ be an ergodic space, where $m$ is an invariant measure under the ergodic transformation~$\theta$. 
Let $A:X\to GL(d,K)$ where $K=\R$ or $K=\C$, be a measurable transformation such that
\[
\log \|A(x)\|, \; \log\|A^{-1}(x)\|\in L^1(X,m)
\]
Define the co-cycle as 
\EQ{\nn
\tilde A: X\times K^d\in (x,v)\mapsto ( \theta x, A(x)v) 
}
Powers of $\tilde A$ lead to matrix products 
$$A^{(n)}_x := \prod_{j=n}^1 A(\theta^j x).$$ 
 The classical Oseledec theorem~\cite{O}, also known as {\em multiplicative ergodic theorem},   guarantees a filtration by linear subspaces, i.e.,  for a.e.~$x\in X$ there exist linear subspaces 
\EQ{\label{fil}
K^d = V^r_x\supsetneq V^{r-1}_x \supsetneq \ldots \supsetneq V^1_x \supsetneq V^0_x=\{0\} 
}
and a sequence of positive numbers $\mu_r > \mu_{r-1}>\ldots > \mu_1$ associated with this filtration, which determine the  Lyapunov exponents  as follows.  
One has that $\dim V^j_x$ is a.e.~constant, and furthermore, the  (set) complement of $V^{j-1}_x$ in $V^j_x$ is characterized by the property that
\[
\f{1}{n} \log \|A^{(n)}_x v\| \to \log \mu_j \qquad v\in V^j_x \setminus V^{j-1}_x 
\]
as $n\to\I$. 
The {\em Lyapunov exponents} 
\EQ{\label{Lyap}
 \lambda_1\ge \lambda_2\ge \ldots \ge\la_{d-1}\ge \la_d
 }
are precisely the  numbers $\{\log \mu_j\}_{j=r}^1$ counted with multiplicity given by  $\dim V^{j}_x-\dim  V^{j-1}_x$. Finally, we remark that
 $A(x) V^j_x= V^j_{\theta x}$ for a.e.~$x$. See   also \cite{Rag, Rue, Led}.

The Lyapunov exponents are of fundamental importance in dynamics and have been widely studied, see  the encyclopedic treatment
in~\cite{BaPe}. For more recent research literature see for example~\cite{AvVi, AvVi2, BonVia, BGMV}.  Results for co-cycles with shift base (but
in a very different spirit from this paper) can be found in~\cite{A, AK, FK, K}.

\section{Statement of the main results}
\label{sec:intro2}

We mainly consider co-cycles whose base dynamics exhibits very weak mixing properties.  
Let $A:\tor\to GL(d,K)$ be continuous, and define the
co-cycle
\EQ{\label{A}
\tor\times K^d\in (x,v)\mapsto ( x+\om, A(x)v) 
}
where $\om$ is Diophantine, which will mean that  
\EQ{\label{Dio}
\| n\om\| \ge \f{c(\om)}{n(\log n)^a} \qquad\forall\; n\ge2
}
with $a>1$ arbitrary but fixed.  Lebesgue almost every~$\om\in(0,1)$ satisfy this condition. The purpose of this note is to point out a mild regularity result that the Lyapunov exponents
exhibit as functions of a parameter which $A$ is assumed to depend on. 
 To be precise, we  prove the following theorem where $(\calE,\met)$ is a compact metric space.  The study of Lyapunov exponents (especially, conditions
 which ensure positivity of the top exponent) of linear co-cycles with deterministic base dynamics goes back to the seminal work by M.\ Herman~\cite{herman}. 

\begin{thm}\label{thm:la1} 
Suppose $A:\tor\times \calE \to GL(d,K)$ is continuous ($K=\R,\C$), and analytic as a function $x\mapsto A(x,E)$ uniformly in $E\in \calE$.
Furthermore, suppose $E\mapsto A(x,E)$ is H\"older continuous, uniformly in~$x\in\tor$. Assume that the Lyapunov exponents   satisfy the {\em gap condition}
\EQ{\label{gap}
\la_j(E) -\la_{j+1}(E) >\kappa>0 \quad\forall\; E\in \calE,\, \forall \, 1\le j<d
}
Then all $\la_j(E)$ are H\"older continuous as a function of $E\in \calE$. Moreover,  
 if \eqref{gap} holds at some point $E_0\in \calE$, then each $\la_j(E)$ is H\"older continuous locally around~$E_0$. In other words, if all exponents are distinct at $E_0$,
then they are all H\"older continuous locally around~$E_0$, and therefore also remain distinct near~$E_0$. 
\end{thm}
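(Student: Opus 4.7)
The plan is to combine three standard ingredients from the Goldstein--Schlag program: reduction to the top Lyapunov exponent via exterior powers, a large deviation theorem (LDT) for the finite-scale exponents coming from subharmonicity, and the \emph{Avalanche Principle} (AP) advertised in the abstract. For each $1\le k<d$ pass to the exterior co-cycle $\wedge^k A$ on $\wedge^k K^d$; this is again analytic in $x$ and H\"older in $E$, and its top two Lyapunov exponents are $\Lambda^{(k)}_1=\la_1+\cdots+\la_k$ and $\Lambda^{(k)}_2=\la_1+\cdots+\la_{k-1}+\la_{k+1}$, whose difference is precisely $\la_k-\la_{k+1}>\ka$. Hence if one proves H\"older continuity of the top exponent under a single gap $\la_1-\la_2>\ka$, then every $\la_j=\Lambda^{(j)}_1-\Lambda^{(j-1)}_1$ (with $\Lambda^{(0)}_1\equiv 0$) is H\"older as well, reducing matters to the top-exponent case.

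Set $u_n(x,E):=\tfrac{1}{n}\log\|A^{(n)}_x(E)\|$ and $L_n(E):=\int_\tor u_n(x,E)\,dx$. By analyticity in $x$, $u_n(\cdot,E)$ extends to a subharmonic function on a complex strip around $\tor$, so the Bourgain--Goldstein theory on Diophantine tori supplies, uniformly in $E$, a large deviation estimate
\[
\mathrm{mes}\bigl\{x\in\tor:|u_n(x,E)-L_n(E)|>\de\bigr\}\le Ce^{-c\de n^{\sigma}}
\]
for some $\sigma>0$ depending on the Diophantine exponent $a$, together with analogous bounds for the logarithmic gap between the top two singular values of $A^{(n)}_x(E)$ (which under $\la_1-\la_2>\ka$ is $\gec n\ka$ on average). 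Apply the AP to the block decomposition $A^{(mn)}_x=\prod_{j=m-1}^{0}A^{(n)}_{x+jn\om}$: the LDT verifies both the singular-value-gap and the angular hypotheses of the AP outside a bad set of measure $\lec me^{-cn^{\sigma}}$. Integrating the pointwise AP identity, controlling the bad set by subharmonic BMO bounds with $m=m(n)\to\I$ chosen optimally, and dividing by $mn$ then produces the key uniform-in-$E$ relation
\[
\la_1(E)=2L_{2n}(E)-L_n(E)+O\bigl(e^{-cn^{\tau}}\bigr)
\]
for some $\tau>0$.

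The H\"older hypothesis on $E\mapsto A(x,E)$, combined with $\|A^{(n)}_x(E)^{\pm 1}\|\le e^{Cn}$, yields the elementary finite-scale bound $|L_n(E)-L_n(E')|\lec e^{Cn}\met(E,E')^\gamma$ by chaining. Inserting this into the previous identity,
\[
|\la_1(E)-\la_1(E')|\lec e^{C_1 n}\met(E,E')^\gamma+e^{-cn^{\tau}}.
\]
Choosing $n\asymp\log(1/\met(E,E'))$ with a sufficiently small implicit constant balances both terms and, provided $\tau$ can be taken equal to $1$ (which is the case on $\tor^1$ under~\eqref{Dio}), produces a genuine H\"older estimate on $\la_1$, and hence on each $\la_j$. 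For the \emph{local} statement: each $L_n^{(k)}$ is continuous in $E$, so $\Lambda^{(k)}_1=\inf_n L_n^{(k)}$ is upper semicontinuous in~$E$; the gap at $E_0$ therefore persists on a neighborhood of $E_0$, and the preceding argument applies locally.

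The main technical obstacle will be the LDT step: securing the exponent $\tau$ uniformly in $E$ and simultaneously verifying both AP hypotheses on a set of large measure requires Cartan-type covering estimates for subharmonic functions combined with the Diophantine small-denominator arithmetic of~\eqref{Dio}. Once that input is in place, the balancing sketched above together with the exterior-power reduction completes the proof.
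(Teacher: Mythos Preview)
Your overall architecture---LDT from subharmonicity, the Avalanche Principle on blocks, then balancing the crude finite-scale bound $|L_n(E)-L_n(E')|\le e^{Cn}\met(E,E')^{\gamma}$ against the AP error---is exactly the paper's. But the reduction you propose hides a genuine gap, and the local argument via upper semicontinuity is incorrect.

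The sentence ``if one proves H\"older continuity of the top exponent under a single gap $\la_1-\la_2>\ka$'' is precisely what this method does \emph{not} deliver. To run the AP on the blocks $A^{(n)}_{x+jn\om}$ you must verify the singular-value hypothesis $\sigma_{1,n}(x,E)/\sigma_{2,n}(x,E)\ge\mu$ off a small set; the LDT only tells you this ratio is $\approx e^{n(\la_{1,n}(E)-\la_{2,n}(E))}$, and you need $\la_{1,n}(E)-\la_{2,n}(E)$ bounded below for \emph{all} large $n$, uniformly in $E$. That uniformity is not given a priori---the convergence $\la_{j,n}\to\la_j$ is not known to be uniform until after the theorem is proved. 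The paper resolves this by an induction on scales: one pass of the AP for $A$ bounds $\la_{1,N}(E)$ from below, but to bound $\la_{2,N}(E)$ from above (and hence keep the finite-scale gap open at scale $N$) one must run the AP for $\Lambda^2 A$, whose own singular-value hypothesis requires $\la_2-\la_3>0$, and so on through all exterior powers. Thus all gaps enter simultaneously even for the regularity of $\la_1$; the paper notes explicitly that if $\la_1>\la_2=\la_3$ the argument collapses. Your exterior-power reduction is therefore circular: applying the ``top-exponent'' argument to $\wedge^k A$ still requires the full chain of gaps for the original cocycle.

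The local argument is also flawed. Upper semicontinuity of $\Lambda^{(k)}_1=\inf_n L_n^{(k)}$ yields only $\Lambda^{(k)}_1(E)\le \Lambda^{(k)}_1(E_0)+\eps$ near $E_0$; since $\la_k-\la_{k+1}=2\Lambda^{(k)}_1-\Lambda^{(k-1)}_1-\Lambda^{(k+1)}_1$, no lower bound on the gap follows. In the paper, persistence of the gaps near $E_0$ is not an input but a \emph{conclusion} of the same multi-scale iteration: continuity of the finite-scale exponents $\la_{j,n}$ fixes all gaps on a small ball $B$ at the initial range of scales $n\in[n_0,e^{n_0}]$, and the inductive AP step (run for every $\Lambda^p A$ in parallel) propagates the inequalities $\la_{j,N}(E)-\la_{j+1,N}(E)>\ka$ to every larger scale without shrinking $B$, eventually yielding $\la_j(E)-\la_{j+1}(E)>\ka$ on $B$.
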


The proof is based on an adaptation of the technique  developed jointly with Michael Goldstein in~\cite{GS} for $SL(2,\R)$ Schr\"odinger co-cycles. 
In fact, \cite{GS} dealt with Schr\"odinger co-cycles for which $E$ plays the role of the ``energy'' (spectral parameter), and one needed to impose the condition of positive
Lyapunov exponents. In Theorem~\ref{thm:la1} this positivity is exactly mirrored  by the gap condition. 
For Schr\"odinger co-cycles   H\"older regularity of the exponents is best possible; this follows from the connection between the Lyapunov exponent and
the integrated density of states as expressed by the Thouless formula, see~\cite{Bou}.

 The H\"older $C^\al$ class which our argument produces for the Lyapunov exponents depends on the size of the gaps, i.e., $\al$ depends on $\kappa$ in~\eqref{gap}.
Note that for $SL(2,\R)$ co-cycles for which the larger exponent is positive, one can refine the argument in~\cite{GS} in such a way that $\al$ does not 
depend on~$\kappa$, see \cite[Chapter 8]{Bou2}. The point here is that 
for such $SL(2,\R)$ co-cycles the exponents are $\la(E)> 0 > -\la(E)$ whence one can take $\kappa>\la$. 
This property allows one to obtain  uniform control on the H\"older exponent~$\al$
even as $\kappa\to0$.  For $GL(d,K)$ with $d>2$ there is no relation between the size of the gaps and the individual exponents, and  it 
therefore remains an open  question as to how $\al$ may 
deteriorate as $\kappa\to0$, let alone what happens when the gap condition is violated. For example, one might expect that the top exponent
remains H\"older continuous provided~\eqref{gap} holds with~$j=1$. However, the argument presented in this paper breaks down completely if, say, 
$\la_1>\la_2=\la_3$. 
Two more comments are in order: 

\begin{itemize}
\item In addition to the regularity of the exponents, we also obtain an estimate on the rate of convergence of the finite-scale
Lyapunov exponents. To be specific, we show in Section~\ref{sec:rates} that for all $n\ge1$
\EQ{\label{C/n}
|\la_{j,n}(E)-\la_j(E)|<\f{C}{n}\quad\forall\; E\in \calE
}
where $\la_{j,n}$ are the finite volume exponents. 
Moreover, one has the following dichotomy: (i) either \eqref{C/n} is optimal along a sequence of $n\to\I$ or 
(ii)  the convergence in~\eqref{C/n} is exponentially fast. 

\item It is of course natural to inquire about other types of dynamics with weak mixing properties, such as shifts $x\mapsto x+\om$ on higher-dimensional tori $\tor^\nu$, as well as shifts on~$\tor$ 
for which $\om$ satisfies weaker Diophantine conditions than~\eqref{Dio}.  In these cases one has a somewhat weaker result than Theorem~\ref{thm:la1}. While
the Lyapunov exponents are again continuous, they are not seen to be H\"older by the technology used here. Rather, one obtains regularity of the form
\EQ{\label{weaker mod}
|\la_j(E)-\la_j(E')|\le C\exp\big(-c\, |\log\met(E,E')|^\sigma\big)
}
with some $0<\sigma<1$ and $c>0$. See Section~\ref{sec:final}.
\end{itemize}

We remark that \eqref{gap} implies for all $1\le j\le d$
\EQ{
\dim V^j_x(E)-\dim V^{j-1}_x(E)= 1 \text{\ \ for a.e.\ \ }x\in\tor, \;\forall\; E\in \calE
} 
where $\{V^{j}_x(E)\}_j$ denotes the Oseledec filtration in dependence of~$E$. 

The technique developed in~\cite{GS} is based on the following two main ingredients: 
\begin{itemize}
\item The {\em Avalanche Principle} (AP) for $2\times 2$ matrices.  This is a deterministic statement. 
\item A {\em Large Deviation Theorem (LDT)} for the matrices $A^{(n)}_x$. These are quantitative versions of the  F\"urs\-tenberg-Kesten theorem, or of Kingman's
sub-additive ergodic theorem. The LDTs are analytical statements, and they depend on the structure of the co-cycles and the dynamics on the base. 
\end{itemize}

The LDT theorem from~\cite{GS}, see also~\cite{BG} and \cite{Bou2}, applies easily to higher-dimensional co-cycles. On the other hand, the
AP does not carry over from the $2\times 2$ case without changes. In fact, the gap condition is intimately tied to the formulation of the AP
as it appears in the following section.   In the final section of this paper we apply the main strategy to the products of random matrices
and derive the same type of conclusions as we did for the shift dynamics. 

This note is organized as follows.  In Section~\ref{sec:AP} we discuss the deterministic AP. In Section~\ref{sec:LDT} we discuss
the large deviation estimates for shifts on~$\tor$ under the Diophantine condition~\eqref{Dio}. Here the analytic dependence of~$A$ on $x$ is important. The proof of Theorem~\ref{thm:la1} is presented in Section~\ref{sec:proof},
and sharp convergence rates for the Lyapunov exponents are presented in Section~\ref{sec:rates}. The final two sections are very sketchy, and 
discuss other types of dynamics on the base. While Section~\ref{sec:final} deals with more general shifts, Section~\ref{sec:random} covers products
or random matrices. 

\section{The Avalanche Principle} \label{sec:AP}

The point of Lemma~\ref{lem:AP1} below, which is a completely deterministic result, is that it effectively
allows one to {\bf  linearize} long (non-commuting) matrix products $A_nA_{n-1}\ldots A_2A_1$. One cannot expect such a result for
general products, even if each factor $A_j$ has large norm. The mechanism which underlies this linearization is as follows:

\begin{itemize}

\item we assume that each matrix has a dominant {\em simple} singular value, see~\eqref{1dim}. 

\item  no two adjacent pairs $A_{j+1}A_j$ cancel, in the sense that the dominant stretching of~$A_j$ is annihilated by~$A_{j+1}$,  see~\eqref{diff}. 

\end{itemize}

Appropriate quantitative formulations of these properties guarantee that the whole product $A_n\ldots A_1$ is ``to leading order'' nothing other than
the composition of the $1$-dimensional dominant actions, see~\eqref{eq:AP}. This is what we mean by linearization. 

\begin{lem}\label{lem:AP1}
Let $\{A_j\}_{j=1}^n\subset GL(d,K)$ satisfy the following properties: for each $1\le j\le n$ there exists a $1$-dimensional
subspace $\calS_j\subset K^d$ such that 
\EQ{\label{1dim}
|A_j v| &=\|A_j\|\, |v| \quad\forall v\in \calS_j \\
|A_j w| &\le \alpha_j |w| \quad\forall w\in \calS_j^\perp \\
\|A_j\| &\ge \alpha_j \mu, \quad \mu\ge 16n^2
}
In addition, we  assume that 
\EQ{ 
\|A_{j+1}\|\,\|A_j\| < \mu^{\f14} \|A_{j+1}A_{j}\|  \qquad\forall\; 1\le j<n
\label{diff}.
}
Then
\EQ{ 
\Bigl|\log\|A_n\cdot\ldots\cdot A_1\|+\sum_{j=2}^{n-1} \log\|A_j\|-\sum_{j=1}^{n-1}\log\|A_{j+1}A_{j}\|\Bigr|
< C\frac{n}{\sqrt{\mu}}
\label{eq:AP}
}
for some absolute constant $C$. 
\end{lem}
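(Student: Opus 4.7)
The plan is to exploit the rank-one structure of each $A_j$ encoded in \eqref{1dim}, propagate it through the full product, and reduce the AP formula to an algebraic cancellation of logarithms, with errors coming from the subdominant singular values.

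First, I would write each matrix as $A_j = \|A_j\|\,u_j' u_j^* + R_j$, where $u_j$ is a unit vector spanning $\calS_j$, $u_j' := A_j u_j/\|A_j\|$, and $\|R_j\| = \|A_j(I - u_j u_j^*)\| \le \alpha_j \le \|A_j\|/\mu$. Multiplying two consecutive decompositions and collecting the four terms gives
\[
A_{j+1} A_j = \|A_{j+1}\|\,\|A_j\|\, b_j\, u_{j+1}' u_j^* + G_j, \qquad \|G_j\| \le 3\,\|A_{j+1}\|\,\|A_j\|/\mu,
\]
where $b_j := \langle u_j', u_{j+1}\rangle$. Combining with \eqref{diff} and $\mu\ge 16n^2$ forces $|b_j|\ge \mu^{-1/4}/2$, and hence
\[
\log\|A_{j+1} A_j\| = \log\|A_{j+1}\| + \log\|A_j\| + \log|b_j| + O(\mu^{-3/4}).
\]

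Next I would control $M_n := A_n\cdots A_1$ by tracking the vector $M_n u_1$. Writing $M_n u_1 = \beta_n u_n' + w_n$ with $w_n\perp u_n'$, and setting $N_n := \prod_{j=1}^n \|A_j\|$ and $c_n := \prod_{j=1}^{n-1} b_j$, the rank-one decomposition yields the recursion
\[
\beta_{j+1} = \beta_j\,\|A_{j+1}\|\, b_j + \mathrm{err}_j,\qquad \|w_{j+1}\| \le \alpha_{j+1}\bigl(|\beta_j| + \|w_j\|\bigr),
\]
with $|\mathrm{err}_j|\le \|A_{j+1}\|\bigl(|\beta_j|/\mu + 2\|w_j\|\bigr)$. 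The essential point is that only the subdominant singular value $\alpha_{j+1}$ appears in the $w$-equation: the $u_{j+1}'$-component of $A_{j+1} w_j$ is absorbed into $\beta_{j+1}$. Using $|b_j|^{-1}\le 2\mu^{1/4}$, an induction then yields $\|w_j\|/|\beta_j| \le C\mu^{-3/4}$ uniformly in $j$, and consequently $|\beta_n| = N_n|c_n|\bigl(1 + O(n/\mu^{3/4})\bigr)$.

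To upgrade from $|\beta_n|$ to $\|M_n\|$ one needs an upper bound on the restriction $M_n|_{u_1^\perp}$. For any unit $v\perp u_1$, $A_1 v$ has norm at most $\alpha_1\le \|A_1\|/\mu$, and each subsequent $A_k$ can stretch the essentially one-dimensional image of $A_{k-1}\cdots A_1 v$ by at most $\|A_k\|(|b_{k-1}|+O(\mu^{-1}))$. Propagating this estimate gives $\|M_n v\|\le C\, N_n|c_n|/\mu^{3/4}$, whence $\|M_n\|\le |\beta_n|(1+O(\mu^{-3/4}))$; together with the trivial lower bound $\|M_n\|\ge\|M_n u_1\|\ge |\beta_n|$, this yields $\log\|M_n\| = \log(N_n|c_n|) + O(n/\mu^{3/4})$. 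Substituting into \eqref{eq:AP} and using the two-matrix estimate, the $\sum\log|b_j|$ contributions cancel between $\log\|M_n\|$ and $\sum\log\|A_{j+1} A_j\|$, the $\log\|A_j\|$ sums telescope identically, and one is left with the claimed error of size $O(n/\mu^{3/4})\le O(n/\sqrt\mu)$. I expect the main obstacle to be precisely the bound on $M_n|_{u_1^\perp}$: the naive triangle-inequality estimate gives $\prod_j\|A_j\|/\mu$, which is too large relative to $N_n|c_n|$ once $|c_n|$ approaches its worst-case lower bound $\sim\mu^{-(n-1)/4}$, and one must instead exploit that at each step only the image-component aligned with the next $u_k$ receives the factor $\|A_k\|$.
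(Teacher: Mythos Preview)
Your rank-one decomposition and the two-matrix estimate are exactly what the paper uses (the paper's $\la_j$ is your $|b_j|$), but the paper organizes the long product differently. It inserts $I=\pi_j+\pi_j^\perp$ at every stage and expands $M_n u$ into $2^n$ branches indexed by the set $I\subset\{1,\dots,n\}$ of positions where $\pi_j^\perp$ occurs. The principal branch $I=\emptyset$ gives your $N_n|c_n|\,|\pi_1 u|$, and each non-principal branch is bounded by $(4\mu^{-1/2})^{|I|}$ times $N_n|c_n|$. Summing over $I$ and using $\mu\ge 16n^2$ yields $\|M_n\|=N_n|c_n|\bigl(1+O(n\mu^{-1/2})\bigr)$ in one stroke, with no separate treatment of $M_n|_{u_1^\perp}$: for $u\perp u_1$ the principal branch vanishes and the remaining sum is automatically $O(\mu^{-1/2})\,N_n|c_n|$. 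That is the main advantage of the combinatorial expansion over your vector-tracking approach.

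In your iterative scheme the step you flag is indeed incomplete as written. For $v\perp u_1$ the vector $A_1 v$ lies in the range of $R_1$, which by the SVD is \emph{orthogonal} to $u_1'$; so the heuristic ``essentially one-dimensional image stretched by $\|A_k\|\,|b_{k-1}|$'' fails at the very first step, and it is not clear from your sketch how the factors $|b_k|$ reappear. What actually works is to rerun your $(\beta,w)$-recursion with initial data $\tilde\beta_1=0$, $\|\tilde w_1\|\le\alpha_1$, and control the normalized quantities $p_j:=|\tilde\beta_j|/(N_j|c_j|)$, $q_j:=\|\tilde w_j\|/(N_j|c_j|)$ jointly; the combination $p_j+2\mu^{1/4}q_j$ grows by at most $1+O(\mu^{-1/2})$ per step and starts at $O(\mu^{-3/4})$, which gives $\|M_n v\|\lesssim N_n|c_n|\mu^{-3/4}$. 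A smaller point: your claim $|\beta_n|=N_n|c_n|\bigl(1+O(n\mu^{-3/4})\bigr)$ is too optimistic, since the per-step relative error $\langle w_j,u_{j+1}\rangle/(\beta_j b_j)$ has size $r_j/|b_j|\lesssim\mu^{-1/2}$, not $\mu^{-3/4}$; the correct cumulative bound is $O(n\mu^{-1/2})$, which is precisely what~\eqref{eq:AP} asserts.
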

\begin{proof}
We denote by $\pi_j$ the orthogonal projection onto $\calS_j$, and by $\pi_j^\perp$ that onto~$\calS_j^\perp$. 
Define $\calT_j:= A_j \calS_j$ and 
\EQ{\label{laj}
\la_j :=  \frac{|\pi_{j+1} v|}{|v|},\quad v\in \calT_j\setminus\{0\}
}
Since $A_j$ is invertible, we see that $\calT_j$ is a line. By a polar decomposition, each $A_j=\tilde U_j D_j U_j$ where $D_j$ is
diagonal, and $\tilde U_j, U_j$ are unitary. Moreover,   
\[
D_j = \diag(\sigma_{j,1}, \sigma_{j,2}, \ldots, \sigma_{j,d}), \quad \sigma_{j,k}>0\;\forall\, 1\le k\le d
\]
and 
\[
\max \sigma_{j,k} = \|A_j\| = \sigma_{j, k_j}, \quad \max_{k\ne k_j} \sigma_{j,k} \le \alpha_j
\]
Then 
\EQ{\nn
\| A_{j+1} A_j\| &\le \la_j \|A_{j+1}\|\, \|A_j\| + \al_j \|A_{j+1}\| + \al_{j+1} \|A_j\| \\
\| A_{j+1} A_j\| &\ge \la_j \|A_{j+1}\|\, \|A_j\| - \al_{j+1} \|A_j\|
}
whence
\EQ{\label{lajroh}
\rho_j - 2\mu^{-1}\le \la_j \le \rho_j + \mu^{-1},\qquad \rho_j := \f{ \| A_{j+1} A_j\| }{\|A_{j+1}\|\, \|A_j\|}\ge \mu^{-\f14}
}
In particular,
\[
\la_j\ge \mu^{-\f14}  - 2\mu^{-1} 
\]
Set $M_n := \prod_{j=n}^1 A_j$. Then  for any $u\in K^d$
\EQ{\label{Mn} 
M_n u = \sum_{\eps_1,\ldots, \eps_n} A_n \pi_n^{\eps_n} A_{n-1} \pi_{n-1}^{\eps_{n-1}} \cdot\ldots \pi_3^{\eps_3} A_2  \pi_2^{\eps_2} A_{1}\pi_1^{\eps_1 } u
}
where each $\eps_j$ is either the empty symbol, or equals $\perp$. 
Consider the contribution to~\eqref{Mn} where all $\eps_j$ equal the empty symbol:
\EQ{\label{eq:main}
& |A_n \pi_n  A_{n-1} \pi_{n-1}  \circ \ldots \circ\pi_3  A_2  \pi_2  A_{1}\pi_1 u| \\
& = |\pi_1 u|\prod_{j=1}^{n-1}\la_j \prod_{j=1}^n \|A_j\| 
}
Now let $I:=\{ j\in[1,n]\mid \eps_j = \perp\}$, $I^c:=[1,n]\setminus I$,  and estimate
\EQ{\label{eq:subord}
&|A_n \pi_n^{\eps_n} A_{n-1} \pi_{n-1}^{\eps_{n-1}} \circ \ldots \circ A_3 \pi_3^{\eps_3} A_2  \pi_2^{\eps_2} A_{1}\pi_1^{\eps_1 } u| \\
&\le |u| \prod_{j\in I^c}  \|A_j\| \prod_{k\in   I} \alpha_k \prod_{(j+1,j)\in I^c\times I^c} \la_j
} 
Comparing \eqref{eq:subord} with \eqref{eq:main} yields
\EQ{\nn 
& \Big( \prod_{j=1}^{n-1}\la_j \prod_{j=1}^n \|A_j\|\Big)^{-1}  \prod_{j\in I^c}  \|A_j\| \prod_{k\in   I} \alpha_k \prod_{(j+1,j)\in I^c\times I^c} \la_j \\
& =\f{\prod_{(j+1,j)\in I^c\times I^c} \la_j}{\prod_{j=1}^{n-1}\la_j} \cdot \prod_{k\in   I} \f{\alpha_k}{\|A_k\|} \\
&\le \mu^{-|I|}\Big(\prod_{j\in I\cup (I-1)} \la_j \Big)^{-1} \le \mu^{-|I|}  (\mu^{-\f14}  - 2\mu^{-1})^{-2|I|} \le 4^{|I|}  \mu^{-\f{|I|}{2}} 
}
provided $\mu^{-\f14}  \ge 4\mu^{-1}$ which is the case for $\mu\ge 8$, say.  Next, we note that
\[
\sum_{\ell=1}^n \binom{n}{\ell} 4^{\ell}  \mu^{-\f{\ell}{2}}  = (1+4\mu^{-\f12})^n - 1 \le 12 n \mu^{-\f12}
\]
provided $4n\mu^{-\f12}\le 1$, or $\mu\ge 16n^2$. Here we used that
\[
(1+x)^n -1 = \exp(n\log(1+x))-1 \le \exp(nx) -1 \le 3nx\text{\ \ if\ \ } nx\le 1
\] 
In conclusion,
\EQ{\label{oben}
\|M_n\| &\ge \prod_{j=1}^{n-1}\la_j \prod_{j=1}^n \|A_j\| \cdot \Big( 1-  \sum_{\ell=1}^n \binom{n}{\ell} 4^{\ell}  \mu^{-\f{\ell}{2}}  \Big) \\
&\ge (1-12 n \mu^{-\f12})\prod_{j=1}^{n-1}\la_j \prod_{j=1}^n \|A_j\| 
}
as well as 
\EQ{\label{unten}
\|M_n\| &\le (1+12 n \mu^{-\f12})\prod_{j=1}^{n-1}\la_j \prod_{j=1}^n \|A_j\| 
}
Next, we note that 
\EQ{\label{largo}
\prod_{j=1}^{n-1}\la_j & \le (1+\mu^{-\f34})^{n-1} \prod_{j=1}^{n-1}\rho_j   \\
\prod_{j=1}^{n-1}\la_j & \ge (1-2\mu^{-\f34})^{n-1} \prod_{j=1}^{n-1}\rho_j
}
Combining \eqref{largo} with \eqref{oben} and~\eqref{unten} implies \eqref{eq:AP}. 
\end{proof}

Note that Lemma~\ref{lem:AP1} remains unchanged if we replace any $A_j$ with a nonzero multiple of itself.
Also, for the scalar case~\eqref{eq:AP} is identically zero. 
The fact that $\calS_j$ are lines, rather than planes or higher-dimensional subspaces, 
played a crucial role in the proof. Indeed, for higher-dimensional spaces we would need to replace~\eqref{laj}
with 
\EQ{\nn
\bar \la_j :=  \sup_{v\in \calT_j\setminus\{0\}} \frac{|\pi_{j+1} v|}{|v|} 
}
Then \eqref{lajroh} remains valid with $\bar \la_j$ instead of~$\la_j$. However, 
in the estimation of the dominant branch of~\eqref{Mn} these quantities are not sufficient. 
Instead, we could only use the {\em lower bounds} 
\EQ{\nn
\underline \la_j :=  \inf_{v\in \calT_j\setminus\{0\}} \frac{|\pi_{j+1} v|}{|v|} 
}
and it is not possible to conclude as we did for $\dim(\calS_j)=1$.  Note that for the case of $SL(2,\R)$ matrices
as in~\cite{GS} the line-condition is automatic. To be precise, we simply take $\al_j=\|A_j\|^{-1}$ so that condition~\eqref{1dim}
becomes $\|A_j\|\ge 4\mu$. 

To illustrate this point further, consider orthogonal projections $P_{j}$ in~$\R^{3}$ or rank~$1$. Let
\EQ{\label{Aj1}
A_{j} = P_{j} + \eps (1-P_{j})
}
where $\eps>0$ is  small but fixed. This ensures that $A_{j}\in GL(3,\R)$. Then $\|A_{j}\|=1$, and $\|A_{j+1}A_{j}\|=|\cos \theta_{j}|$
as $\eps\to0$ where $\theta_{j}$ is the angle between the range of~$P_{j}$ and that of~$P_{j+1}$. 
Moreover, 
\[
\|A_n\cdot\ldots\cdot A_1\| \to \prod_{j=1}^{n -1} | \cos \theta_{j}| 
\]
as $\eps\to0$ whence the expression in~\eqref{eq:AP} reads
\EQ{\nn 
& \log\|A_n\cdot\ldots\cdot A_1\|+\sum_{j=2}^{n-1} \log\|A_j\|-\sum_{j=1}^{n-1}\log\|A_{j+1}A_{j}\| \\
&= \log\|A_n\cdot\ldots\cdot A_1\| - \sum_{j=1}^{n-1}\log\|A_{j+1}A_{j}\| \to 0 \text{\ \ as\ \ }\eps\to0
}
Lemma~\ref{lem:AP1} gives the precise conditions in terms of~\eqref{diff} and~\eqref{1dim} such
that~\eqref{eq:AP} holds for non-zero~$\eps$ in~\eqref{Aj1}. Note that the former condition simply reads $|\cos\theta_{j}|>\mu^{-\f14}$.

On the other hand, define  now 
\[
A_{j} := 1-P_{j} + \eps P_{j}
\]
In that case $\|A_{j+1}A_{j}\|= 1$ for any $0<\eps\le1$ since $2$-planes have a non-zero intersection in~$\R^{3}$. 
It follows that~\eqref{diff} trivially holds, as does~\eqref{1dim} at least when $\eps\to0$. 
However, \eqref{eq:AP} becomes 
\[
\|A_n\cdot\ldots\cdot A_1\|
>\exp(- C\frac{n}{\sqrt{\mu}})
\]
for any $\eps>0$ which is absurd since the left-hand side can be zero in the limit $\eps\to0$, and we may also send $\mu\to\I$
as $\eps\to0$.

\section{The Large Deviation Estimates} \label{sec:LDT}

Next, we state the LDTs for the co-cycles, see \cite[Theorem 3.8, Lemma 4.1]{GS}, and \cite[Chapter 5]{Bou2}. 
 
\begin{lem}
\label{lem:LDT}
Under the Diophantine condition on~$\omega$, see \eqref{Dio}, one has for any $\delta>0$
\EQ{
\label{LDT}
\Big |\Big \{x\in\tor \;\big|\;  | n^{-1}\log \|A_x^{(n)}(E)\| - \la_{1,n}(E)  | >  \delta \Big\} \Big| <\exp( -c\delta^2 n + C(\log n)^b)
}
where $c, C, b$ are positive constants depending on $A, \om$, and $n\ge2$.  Here
\EQ{\label{la1n}
\la_{1,n}(E) := \int_{\tor} n^{-1}\log \|A_x^{(n)}(E)\|\, dx 
}
for all $n\ge1$, $E\in \calE$. 
\end{lem}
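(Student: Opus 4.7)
The plan is to establish the LDT by combining three ingredients: subharmonicity of $u_n(x):=n^{-1}\log\|A^{(n)}_x(E)\|$ in the analytic variable $x$, a near-invariance of $u_n$ under the base shift, and Birkhoff averaging against the Diophantine rotation.

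First, I would set up the subharmonic framework. Since $x\mapsto A(x,E)$ extends analytically to a strip $\{|\im z|<\rho_0\}$, and both $A$ and $A^{-1}$ are uniformly bounded on $\tor\times\calE$, the function $z\mapsto u_n(z)=n^{-1}\log\|A^{(n)}_z(E)\|$ is subharmonic on the strip with $\|u_n\|_{L^\infty}\le K$ independent of $n$ and $E$. The Riesz decomposition of a bounded subharmonic function then yields a uniform BMO bound for $u_n|_\tor$, equivalently the Fourier decay
\[
|\hat{u}_n(k)|\le\frac{C_0}{|k|},\qquad k\ne 0,
\]
with $C_0$ independent of $n$ and of $E\in\calE$.

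Second, I would record the near-invariance of $u_n$ under the shift. The factorizations $A^{(n+1)}_x=A(x+n\om)A^{(n)}_x=A^{(n)}_{x+\om}A(x)$, combined with the boundedness of $\log\|A^{\pm1}\|$, imply $|u_n(x+\om)-u_n(x)|\le C_1/n$ for all $x$. Hence the Birkhoff average
\[
B_M u_n(x):=\frac{1}{M}\sum_{j=0}^{M-1}u_n(x+j\om)
\]
satisfies $|B_M u_n(x)-u_n(x)|\le C_1 M/n$ pointwise, while $\int B_M u_n\,dx=\int u_n\,dx=\la_{1,n}$.

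Third, I would control the mean-square deviation of $B_M u_n$ from $\la_{1,n}$ via Plancherel:
\[
\|B_M u_n-\la_{1,n}\|_{L^2(\tor)}^{2}=\sum_{k\ne 0}|\hat{u}_n(k)|^{2}|\Phi_M(k\om)|^{2},\qquad |\Phi_M(\te)|\le\min\!\big(1,(2M\|\te\|)^{-1}\big).
\]
Using the Diophantine bound $\|k\om\|^{-1}\le c(\om)^{-1}|k|(\log|k|)^{a}$ on the small-frequency range $|k|\le M$, and the decay $|\hat{u}_n(k)|\le C_0/|k|$ on the tail $|k|>M$, this gives
\[
\|B_M u_n-\la_{1,n}\|_{L^2(\tor)}\le\frac{C(\log M)^{a}}{\sqrt{M}}.
\]

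Fourth, I would promote this $L^2$ estimate to a sub-Gaussian tail bound. Iterating the Plancherel calculation against higher powers of $B_M u_n-\la_{1,n}$ and using the uniform $L^\infty$ bound from step one together with the Diophantine denominator estimate, one obtains $\|B_M u_n-\la_{1,n}\|_{L^q}\le C\sqrt{q}(\log M)^{a}/\sqrt{M}$ for all $q\ge 2$. Chebyshev then yields
\[
\big|\{x\in\tor:\,|B_M u_n(x)-\la_{1,n}|>s\}\big|\le\exp\!\big(-c\, s^{2}M/(\log M)^{2a}\big).
\]
Choosing $s=\de/2$ and $M$ of order $\de n$---so that step two forces $|B_M u_n - u_n|\le\de/2$ pointwise---and combining via the triangle inequality gives
\[
\big|\{x\in\tor:\,|u_n(x)-\la_{1,n}|>\de\}\big|\le\exp\!\big(-c\de^{2}n+C(\log n)^{b}\big),
\]
with $b$ depending on $a$, as asserted.

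The principal obstacle is step four, the upgrade from the Plancherel $L^2$ bound to the sub-Gaussian tail. The subharmonic structure installed in step one is essential here, since the uniform BMO bound alone yields only exponential (via John--Nirenberg), not Gaussian, concentration; the Gaussian strength comes from a more delicate quasi-Hoeffding type argument that exploits the cancellation in the Fourier representation of $B_M u_n$ together with the uniform $L^\infty$ control. Tracking the logarithmic losses introduced by the Diophantine condition through this iteration is what produces the $(\log n)^{b}$ correction in the exponent.
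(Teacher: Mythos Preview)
Your overall architecture---subharmonic extension, almost-invariance $|u_n(x+\om)-u_n(x)|\le C/n$, and a concentration bound for the Birkhoff average $B_M u_n$, followed by the choice $M\simeq \delta n$---is exactly the structure of the paper's proof. The gap is quantitative and sits in step four.

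A sub-Gaussian tail for the Birkhoff average, i.e.\ a bound of the form $\exp(-c\,s^2 M/(\log M)^{2a})$, is not strong enough. With $s=\delta/2$ and $M\simeq \delta n$ one gets $s^2 M\simeq \delta^3 n$, so your final exponent is $-c\delta^3 n$, not the $-c\delta^2 n$ claimed in the lemma; your last displayed line simply miscomputes $s^2 M$. What is actually needed---and what the paper invokes as Theorem~3.8 of~\cite{GS}---is the \emph{sub-exponential} bound
\[
\big|\{x\in\tor:\,|B_m u_n(x)-\langle u_n\rangle|>\delta\}\big|<\exp(-c\,\delta m+C(\log m)^b),
\]
linear in $\delta$. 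Plugging $m=\delta n/C$ then yields $-c\delta\cdot\delta n=-c\delta^2 n$ as required.

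This linear-in-$\delta$ estimate does not follow from the BMO bound or the Fourier decay $|\hat u_n(k)|\le C_0/|k|$ alone; John--Nirenberg and any ``quasi-Hoeffding'' argument based on those inputs will produce at best the sub-Gaussian rate. The proof in~\cite{GS} uses the full Riesz representation $u_n=h+\int\log|\cdot-y|\,d\mu(y)$ of a bounded subharmonic function on the strip: the harmonic part $h$ has exponentially decaying Fourier coefficients, while the logarithmic-potential part is handled by a Cartan-type small-set estimate exploiting the special structure of $\log|\cdot|$. It is this second ingredient that upgrades the concentration from $\exp(-c\delta^2 m)$ to $\exp(-c\delta m)$, and it is missing from your sketch.
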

\begin{proof}
The argument proceeds in two steps, the first step being analytical. 
The map $x\mapsto n^{-1}\log \|A_x^{(n)}(E)\|$ extends to a neighborhood $\calA$ of $\tor$ in the complex plane as a {\em subharmonic function}, uniformly in~$E\in \calE$. 
Moreover, we have
\EQ{\label{ann upper}
\sup_{z\in \calA} n^{-1}\log \|A_z^{(n)}(E)\| \le M \quad \forall\; E\in \calE
}
for some $M=M(A,\calE)$ as well as 
\EQ{\label{tor av}
&\int_{\tor} n^{-1}\log \|A_x^{(n)}(E)\| \, dx \\&\ge \int_{\tor} (dn)^{-1}\log (c(d) |\det A_x^{(n)}(E)|) \, dx \\
&= \int_{\tor} d^{-1}\log |\det A(x,E)|  \, dx  + (dn)^{-1} \log c(d) > -M \quad \forall\; E\in \calE 
}
Properties \eqref{ann upper} and \eqref{tor av} ensure that Theorem~3.8 in~\cite{GS} applies to the subharmonic function
\[
u_n(z,E) := (Mn)^{-1}\log \|A_z^{(n)}(E)\| 
\]
uniformly in $n$.  The conclusion is as follows: for all $\delta>0$, and any $m\ge1$, 
\EQ{\label{38}
&\Big |\Big \{x\in\tor \;\big|\;  \big| m^{-1}\sum_{j=1}^m u_n(x+j\om,E)  -  \lan u_n(\cdot,E)\ran  \big | >  \delta \Big\} \Big| \\ &<\exp( -c\delta m + C(\log m)^b)
}
where $\lan f\ran := \int_\tor f(x)\, dx$ is the mean, and $c,C,b$ only depend on $A,\omega$ (and not on  $m,\delta,n,E$). 

 The second step is to note the almost invariance of the co-cycles: 
\EQ{\nn 
&\sup_{x\in\tor} \Big| n^{-1}\log \|A_{x+\om}^{(n)}(E)\| - n^{-1}\log \|A_x^{(n)}(E)\| \Big| \\
& \le n^{-1}\big(\sup_{x\in\tor} \log \|A(x,E)\|+ \sup_{x\in\tor}\log \|A^{-1}(x,E)\| \big) = \f{C}{n}
}
Upon iteration, we have 
\EQ{\label{almost inv}
\sup_{x\in\tor} |u_n(x+k\om,E) -  u_n(x,E)| \le  \f{Ck}{n}
}
To obtain \eqref{LDT} we now set $m:= \delta n/C$ in~\eqref{38} and conclude via~\eqref{almost inv}. 
\end{proof} 

By general principles,
\[
\la_{1,n}(E) \to \la_1(E) \text{\ \ as\ \ }n\to\I
\]
as a monotone decreasing sequence.  Indeed, it follows from the co-cycle relation
\[
A^{(n+m)}_x = A^{(n)}_{x+m\om}\circ A^{(m)}_x
\]
that
\[
\log\|A^{(n+m)}_x\|\le \log\|A^{(n)}_{x+m\om}\|+\log\| A^{(m)}_x\|
\]
Averaging in~$x$ implies $(n+m)\la_{1,n+m} \le n \la_{1,n} + m\la_{1,m}$, whence the claim. 

We now turn to the exterior powers of $A^{(n)}_x$, which we denote by $\Lambda^p A^{(n)}_x$. 
On a general vector space~$V$ with a linear operator $L:V\to V$ one defines 
\[
(\Lambda^p L) f (\xi_1,\ldots,\xi_p) := f(L^* \xi_1,\ldots, L^* \xi_p) \qquad\forall\; \xi_j\in V^*
\]
where $f$ is an alternating linear form on $(V^*)^p$. 
We have the following more general version of the LDT,
which  allows us to control the Lyapunov exponents $\la_j$ with $j\ge2$. 

\begin{lem}
\label{lem:LDT2}
Under a suitable Diophantine condition on~$\omega$ one has for any $\delta>0$
\EQ{
\nn 
\Big |\Big \{x\in\tor \;\big|\;  \big| n^{-1}\log \| \Lambda^p A_x^{(n)}(E)\| - \sum_{i=1}^p \la_{i,n}(E)  \big| >  \delta \Big\} \Big| <\exp( -c\delta^2 n + C(\log n)^b)
}
where $c, C, b$ are positive constants depending on $A, \om$, and $n\ge2$. The $\lambda_{i,n}$ are defined inductively via
\[
\int_{\tor} \f{1}{n}\log \| \Lambda^p A_x^{(n)}(E)\| \, dx =  \sum_{i=1}^p \la_{i,n}(E) 
\]
with $1\le p\le d$. 
\end{lem}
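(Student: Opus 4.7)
The plan is to reduce Lemma~\ref{lem:LDT2} to Lemma~\ref{lem:LDT} applied to the cocycle of exterior powers. Define
\[
B(x,E) := \Lambda^p A(x,E) \in GL(\tbinom{d}{p}, K).
\]
The crucial algebraic fact is the multiplicativity of the exterior power functor, $\Lambda^p(A_1 A_2 \cdots A_n) = (\Lambda^p A_1)(\Lambda^p A_2)\cdots(\Lambda^p A_n)$, which guarantees that $\Lambda^p A_x^{(n)}(E) = B_x^{(n)}(E)$. Hence the statement we need is exactly the LDT of Lemma~\ref{lem:LDT} for the new cocycle~$B$, and the definition $\sum_{i=1}^p \la_{i,n}(E) = \int_{\tor} n^{-1}\log\|B_x^{(n)}(E)\|\,dx$ plays the role of $\la_{1,n}$ there.

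First, I would verify that $B$ inherits the hypotheses imposed on $A$ in Section~\ref{sec:LDT}. Since $\Lambda^p$ is polynomial in the matrix entries, $B$ is continuous on $\tor\times\calE$, analytic in $x$ uniformly in $E$, and H\"older in $E$. Invertibility of $B$ is guaranteed uniformly in $(x,E)$ because $\det \Lambda^p A = (\det A)^{\binom{d-1}{p-1}}$, which is bounded away from zero by compactness. Moreover $\|B\|\le\|A\|^p$, so $B$ extends to the same complex neighborhood~$\calA$ with uniform upper bound
\[
\sup_{z\in\calA} n^{-1}\log\|B_z^{(n)}(E)\| \le pM,
\]
and the same sort of lower bound on the torus average as in~\eqref{tor av} follows from the inequality $\|L\|^{r}\ge c(r)|\det L|$ applied with $r=\binom{d}{p}$, combined with the determinant relation above. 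In particular
\[
\int_\tor n^{-1}\log\|B_x^{(n)}(E)\|\, dx \ge -M' \quad \forall\; E\in\calE,
\]
for some $M'$ independent of $n$ and~$E$.

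With these two analytic ingredients in place, the proof of Lemma~\ref{lem:LDT} applies verbatim to~$B$: the function $u_n(z,E) := (pMn)^{-1}\log\|B_z^{(n)}(E)\|$ is subharmonic in~$z$ on~$\calA$, so Theorem~3.8 of~\cite{GS} yields
\[
\Big| \big\{ x \in\tor : \big| m^{-1}\sum_{j=1}^m u_n(x+j\om, E) - \langle u_n(\cdot, E)\rangle \big| > \delta \big\} \Big| < \exp(-c\delta m + C(\log m)^b).
\]
Almost invariance,
\[
\sup_{x\in\tor} |u_n(x+k\om,E) - u_n(x,E)| \le \f{Ck}{n},
\]
is immediate from the uniform bounds on $\|B\|$ and $\|B^{-1}\|$, the latter since $B^{-1}=\Lambda^p(A^{-1})$. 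Setting $m=\delta n/C$ as in the proof of Lemma~\ref{lem:LDT} then yields the claimed estimate.

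The main potential obstacle is the uniform lower bound on $\int_\tor n^{-1}\log\|B_x^{(n)}\|\,dx$, since invertibility of $B$ alone does not give quantitative control. However, the determinant identity $\det \Lambda^p A = (\det A)^{\binom{d-1}{p-1}}$ reduces this to the hypothesis already used in Lemma~\ref{lem:LDT}, namely that $\log|\det A(\cdot,E)|$ is uniformly integrable over $\tor\times\calE$. The Diophantine condition is unchanged; the constants $c,C,b$ depend on $p$, $d$, $A$, $\om$, but not on $n$, $\delta$, or $E$.
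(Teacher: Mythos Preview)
Your proposal is correct and follows essentially the same route as the paper: reduce to Lemma~\ref{lem:LDT} by replacing $A$ with the exterior-power cocycle $B=\Lambda^p A$, using the multiplicativity $\Lambda^p A_x^{(n)}=B_x^{(n)}$, the preserved analyticity in~$x$, and the invertibility of~$B$ to reproduce the subharmonic and almost-invariance ingredients. Your write-up is in fact more detailed than the paper's (you make explicit the determinant identity $\det\Lambda^p A=(\det A)^{\binom{d-1}{p-1}}$ needed for the lower bound in~\eqref{tor av}), but the argument is the same.
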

\begin{proof}
The proof is essentially the same as that of the previous lemma; indeed, we retain the analyticity of 
\[
z\mapsto \Lambda^p A_z^{(n)}(E)
\]
on the annulus $\calA$ uniformly in $E\in \calE$. This allows us to apply the subharmonic machinery as before. Moreover, we have the product structure
\[
\Lambda^p A_x^{(n)}(E) = \prod_{j=n}^1 \Lambda^p A(x+j\om,E)
\]
which ensures the almost invariance \eqref{almost inv} (note that $\Lambda^p A$ remains invertible).  So the argument leading to the previous lemma 
applies. 
\end{proof}

Write (dropping $E$ for simplicity) $A_x^{(n)} = \tilde U_x^{(n)}\circ D_x^{(n)} \circ U_x^{(n)}$, where $\tilde U_x^{(n)}, U_x^{(n)}$ are unitary, and $D_x^{(n)}$
is diagonal with entries
\[
\s_{1,n}(x)\ge \s_{2,n}(x) \ge \ldots \ge \s_{d-1,n}(x)\ge \s_{d,n}(x)>0
\]
It follows that $\Lambda^p A_x^{(n)} = \Lambda^p\tilde U_x^{(n)}\circ \Lambda^p D_x^{(n)} \circ \Lambda^p U_x^{(n)}$, whence
\EQ{\nn
\int_{\tor} \f{1}{n}\log \| \Lambda^p A_x^{(n)} \| \, dx &= \int_{\tor} \f{1}{n}\log \| \Lambda^p D_x^{(n)} \| \, dx \\
&= \sum_{i=1}^p \f{1}{n}\int_{\tor} \log \s_{i,n}(x)\, dx = \sum_{i=1}^p \la_{i,n}
}
since $\Lambda^p U$ is an isometry if $U$ is. Consequently, 
\EQ{\label{sigla}
\f{1}{n} \int_{\tor}\log \s_{i,n}(x)\, dx = \la_{i,n} \quad\forall\; 1\le i\le d
}
In addition, we note that as decreasing limits of continuous functions, the sums
\[
\sum_{i=1}^p \la_{i}(E),\quad 1\le p\le d
\]
are upper-semicontinuous. But more regularity is harder to obtain. 

\section{The proof of Theorem~\ref{thm:la1}}
\label{sec:proof}
 
\begin{proof}[Proof of Theorem~\ref{thm:la1}] 
Normalizing, we may assume that $A:\tor\to SL(d,K)$. This implies
\EQ{
\label{zero sum}
\sum_{j=1}^d \la_j(E)=0
}
For simplicity, we consider the case of the
two largest Lyapunov exponents first, and our goal is to show that the largest Lyapunov exponent is H\"older. 
In other words, $j=1$ in~\eqref{gap}. 
Fix $E_0\in \calE$. We may choose $n_0$ such that 
\EQ{
|\la_{i,n}(E_0)-\la_i(E_0)| < \de_0 \quad \forall\; n\ge n_0, \; 1\le i\le d
}
where $\de_0>0$ is a small constant which will be specified later. By \eqref{sigla}  
\EQ{
\f{1}{n} \int_{\tor}\log \| A^{(n)}(x,E_0)\|\, dx&= \f{1}{n} \int_{\tor}\log \s_{1,n}(x,E_0)\, dx = \la_{1,n}(E_0)\\
&\ge \la_1(E_0)  \ge \kappa + \la_2(E_0) \\
&\ge \kappa  -\de_0 + \la_{2,n}(E_0) \\&= \kappa  -\de_0 + \f{1}{n} \int_{\tor}\log \s_{2,n}(x,E_0)\, dx
}
Define $\B_n\subset \tor$ as the set of $x\in\tor$ for which 
\EQ{
\Big| n^{-1}\log \| \Lambda^p A_x^{(n)}(E_0)\| - \sum_{i=1}^p \la_{i,n}(E_0) \Big|  > \de_0 
}
with $p=1$ or $p=2$. 
By Lemma~\ref{lem:LDT2}, $|\B_n|< e^{-c\de_0^2 n}$ for $n$ large enough.  On $\B_n^c$ we have
\EQ{\nn 
\f{1}{n}  \log \s_{1,n}(x,E_0) &> \la_{1,n}(E_0)-\de_0 \ge \la_{1}(E_0)-\de_0 \\
\f{1}{n}  \log \s_{2,n}(x,E_0) &< \la_{2,n}(E_0)+2\de_0 \le \la_{2}(E_0)+3\de_0
}
whence
\EQ{\label{srat}
\s_{1,n}(x,E_0) \s_{k,n}(x,E_0)^{-1} \ge e^{n(\kappa-4\de_0)} \quad \forall \;2\le k\le d
}
Let $N>n$ be a multiple of $n$ (for simplicity of notation) and write
\[
A_x^{(N)}(E_0) = \prod_{j=k-1}^0 A_{x+jn\om}^{(n)}(E_0)
\]
We wish to apply the AP to this product with $A_j:=A_{x+jn\om}^{(n)}(E_0)$. In order to do this, we take $N= \lfloor e^{\de_1 n}\rfloor$ where $0<\de_1\ll \de_0^2$ is
another constant. The union of $N/n=k$ shifted copies of $\B_n$ has measure
\[
< \f{N}{n} e^{-c\de_0^2 n} < e^{-(c\de_0^2-\de_1) n} < e^{-c\de_0^2 n/2}
\]
Denoting this set again by $\B_n$, we see that conditions~\eqref{1dim} hold with $\mu$ determined by~\eqref{srat}, i.e., 
\[
\mu = e^{n(\kappa-4\de_0)} > N^{C}
\]
where $C$ is a large constant, say $C=\kappa/(2\de_1)$. By the LDT applied to $A^{(2n)}_x(E_0)$ and $A^{(n)}_x(E_0)$, we may also
assume that~\eqref{diff} holds on~$\B_n^c$ for all $0\le j<k$. 

Hence, \eqref{eq:AP} holds, viz.,
\EQ{\label{ANn}
\Bigl|\log\| A_x^{(N)}(E_0) \|+\sum_{j=1}^{k-2} \log\|A_{x+jn\om}^{(n)}(E_0) \|-\sum_{j=0}^{k-2}\log\|A_{x+jn\om}^{(2n)}(E_0)\|\Bigr|
< C\frac{N}{n\sqrt{\mu}} 
}
provided $x\in \B_n^c$. The right-hand side of~\eqref{ANn} is of the form $N^{-C}$ for a large constant~$C$, as is the measure of~$\B_n$. Averaging~\eqref{ANn}
over $\B_n^c$ therefore yields 
\EQ{
\nn
| N\la_{1,N}(E_0) + (k-2) n \la_{1,n}(E_0)  - 2(k-1)n \la_{1,2n}(E_0) | < N^{-C}
}
which further implies, for all $n\ge n_0$, and with $N= \lfloor e^{\de_1 n}\rfloor$ where $\de_1$ depends on~$\kappa$, 
\EQ{
\label{GS}
| \la_{1,N}(E_0) +   \la_{1,n}(E_0)  - 2  \la_{1,2n}(E_0) | < C\f{n}{N}= C(\kappa) \f{\log N}{N}
}
We now argue that~\eqref{GS} remains valid if we replace $E_0$ with a nearby~$E$. To see this, we first run the argument leading up to~\eqref{GS} for all $n\in [n_0, e^{n_0}]=:\calN_0$
and note that~\eqref{srat} in that case remains valid near~$E_0$; in fact on some ball $B:=B(E_0,\eps)\subset\calE$ 
where $\eps$ depends on~$\kappa$ and~$n_0$. For example, we may take $\eps=\exp(-Ce^{n_0})$ with some large~$C$. 
Furthermore, the condition~\eqref{1dim} and~\eqref{diff} also remain valid on that ball. Therefore, \eqref{GS} does hold
for $E\in B$  with $n\in\calN_0$. In particular, with $$N\in \{\lfloor e^{\de_1 n}\rfloor\mid n\in \calN_0\}=:\calN_1,$$ we have 
\EQ{\label{gapu}
\la_{1,N}(E) &>  2  \la_{1,2n}(E)-\la_{1,n}(E) -C(\kappa) \f{\log N}{N} \\
&> \la_1(E_0)-\de_0 -C(\kappa) \f{\log N}{N}  \qquad \forall\; E\in B
}
The idea is now to combine this estimate with one for $\la_{2,N}(E)$. This will allow us to show that the gap between the $\{\la_{j,n}(E)\}_{j=1}^d$ does not shrink by much when we pass from scale $n\in\calN_0$ to 
 scale~$N\in\calN_1$ with $E\in B$. If $d=2$ we are of course done, see~\eqref{zero sum}. If $d>2$, then we need to re-run the AP-argument for $\Lambda^2 A^{(n)}_x(E_0)$.  
This is legitimate, since the two largest exponents of the $\Lambda^2 A$ co-cycle are $$\la_1(E_0)+\la_2(E_0) > \kappa+ \la_1(E_0)+\la_3(E_0).$$
The same AP argument as before therefore yields
\EQ{
\label{GS2}
| \la_{2,N}(E_0) +   \la_{2,n}(E_0)  - 2  \la_{2,2n}(E_0) | < C\f{n}{N}= C(\kappa) \f{\log N}{N}
}
and thus, with $N\in\calN_1$ related to~$n\in\calN_0$ as before, 
\EQ{\nn 
\la_{2,N}(E) &<  2  \la_{2,2n}(E)-\la_{2,n}(E) + C(\kappa) \f{\log N}{N} \\
& < \la_2(E_0)+2\de_0 + C(\kappa) \f{\log N}{N}  \qquad \forall\; E\in B
}
Subtracting this from~\eqref{gapu} yields
\EQ{\label{gap2}
\la_{1,N}(E) - \la_{2,N}(E) > \la_1(E_0)-\la_2(E_0)-3\de_0 -C(\kappa) \f{\log N}{N}  \qquad \forall\; E\in B
}
By the same token, we have this property for all gaps, i.e., for any $1\le j<d$, 
\EQ{\nn
\la_{j,N}(E) - \la_{j+1,N}(E) > \la_j(E_0)-\la_{j+1}(E_0)-3\de_0 -C(\kappa) \f{\log N}{N} >\kappa\qquad \forall\; E\in B
}
provided $\de_0$ is sufficiently small, and $n_0$ large. 

We claim that we may now iterate this argument without shrinking~$B$. First, we define 
$$  \{\lfloor e^{\de_1 n}\rfloor\mid n\in \calN_\ell\}=:\calN_{\ell+1}$$
for each $\ell\ge1$ and note that by construction $$\cup_{\ell\ge0}\calN_\ell = [n_0,\I)$$
In fact, $\calN_{\ell}$ and $\calN_{\ell+1}$ overlap.  To $n\in\calN_0$ associate $N\in\calN_1$
as above, and then set $\Nbar = \lfloor e^{\de_1 N}\rfloor\in\calN_2$. 
We may now repeat the argument leading to~\eqref{GS}, \eqref{GS2}, respectively, 
for the $N,\Nbar$ scales.  Indeed, 
 these estimates now  become
\EQ{
\label{GS2*}
| \la_{1,\Nbar}(E) +   \la_{1,N}(E)  - 2  \la_{1,2N}(E) | & < C(\kappa) \f{\log \Nbar}{\Nbar} \\
| \la_{2,\Nbar}(E) +   \la_{2,N}(E)  - 2  \la_{2,2N}(E) |  & < C(\kappa) \f{\log \Nbar}{\Nbar}
}
for all $E\in B$. 
Next, we  replace $\la_{1,N}(E_0)$ in~\eqref{GS} (with $E\in B$ instead of~$E_0$)
with $\la_{1,2N}(E)$. Subtracting the resulting estimate from~\eqref{GS} implies that  
\EQ{\label{N2N}
|\la_{1,N}(E)-\la_{1,2N}(E)| < C(\kappa) \f{\log N}{N}\qquad \forall\; E\in B
}
for all~$N\in\calN_1$,   and similarly for the other exponents.  In combination with~\eqref{GS2*} and~\eqref{gap2}
this allows us to conclude that for all $E\in B$, 
\EQ{ \label{Nbar}
\la_{1,\Nbar}(E)-\la_{2,\Nbar}(E) &> \la_{1,N}(E)-\la_{2,N}(E) - C(\kappa) \big( \f{\log N}{N} + \f{\log \Nbar}{\Nbar} \big)\\
&> \la_1(E_0)-\la_2(E_0)-3\de_0 - C(\kappa) \big( \f{\log N}{N} + \f{\log \Nbar}{\Nbar} \big)
}
It is essential here that we do not lose more factors of~$\de_0$, but only subtract terms such as on the right-hand side
which are summable over a sequence of scale $N_j\in \calN_j$.  In view of~\eqref{Nbar} we can indeed repeat the arguments
again for the next scale $N':= \lfloor e^{\de_1 \Nbar}\rfloor\in\calN_3$ and so on. This establishes our claim concerning 
infinite iterations while keeping the ball~$B$ fixed.

In particular, \eqref{N2N} will hold for all large $N$.
Summing these estimates over a sequence~$2^kN$ implies that 
\EQ{\label{crude rate}
|\la_{j,N}(E)-\la_{j}(E)| < C(\kappa) \f{\log N}{N}\qquad \forall\; E\in B 
}
and all large~$N$ and all $1\le j\le d$.  In particular, we have the uniform gaps 
\EQ{\label{open}
\la_{j}(E) - \la_{j+1}(E) > \kappa \qquad \forall\; E\in B 
}
Note that we have shown that the validity of~\eqref{open} at one point~$E_0$ implies its validity for all~$E$ near~$E_0$.

To  establish the H\"older regularity of the exponents, we 
first replace~$\la_{1,N}(E_0)$ with $\la_{1}(E)$ in~\eqref{GS} which yields
\EQ{
\label{GS3}
| \la_{1}(E) +   \la_{1,n}(E)  - 2  \la_{1,2n}(E) | < C\f{n}{N}= C(\kappa) \f{\log N}{N} \qquad \forall\; E\in B
}
and all large $N$ (the relation between $n$ and~$N$ is as before). The idea is now to pick two $E,E'\in B$
and then to compare the resulting estimates~\eqref{GS3} with an $N$ that is adjusted to $\met(E,E')$, the distance between $E,E'\in \calE$. The be specific, 
we claim the crude bound
\EQ{\label{crude}
|\la_{1,n}(E) - \la_{1,n}(E')|\le e^{C_0 n}\,\met(E,E')^{\beta_0},
}
for some $\beta_0\in(0,1]$. This follows from the assumed H\"older regularity of $E\mapsto A(x,E)$ with a 
a constant $C_0$ that depends on $A$. Indeed,  expanding the co-cycle product and writing the difference in a telescoping fashion
leads to the estimate 
\EQ{\label{crude2}
\| A^{(n)}_x(E)-A^{(n)}_x(E')\|\le e^{Cn}\, \met(E,E')^{\beta_0}
}
Furthermore, $$\|A^{(n)}_x(E)\|\ge c(d) |\det A^{(n)}_x(E)|^{\f{1}{d}} \ge  ce^{-Cn}  $$ whence 
\EQ{ \nn 
|\la_{1,n}(E) - \la_{1,n}(E')| &\le \f{1}{n}\int_\tor \log\Big( 1+ \f{ \| A^{(n)}_x(E)-A^{(n)}_x(E')\| }{ \| A^{(n)}_x(E)\| }\Big)\, dx \\
&\le \sup_{x\in\tor}  \f{ \| A^{(n)}_x(E)-A^{(n)}_x(E')\| }{ \| A^{(n)}_x(E)\| }  \le Ce^{Cn}\, \met(E,E')^{\beta_0}
}
 for all $n\ge1$ as claimed. 
 
 We infer from~\eqref{GS3}, \eqref{crude} that
\EQ{
\label{la1Ediff}
| \la_{1}(E) - \la_1(E')| &\le e^{C_0 n}\,\met(E,E')^{\beta_0} +    C(\kappa) \f{\log N}{N} \\
& \le \met(E,E')^{\beta_0}\, N^{C_0\de_1^{-1}}  +    C(\kappa) \f{\log N}{N}  \qquad \forall\; E,E'\in B
}
Optimizing over~$N$ yields the desired result:
\EQ{
\label{Holder}
| \la_{1}(E) - \la_1(E')| &\le  C(\kappa) \met(E,E')^{\gamma}  \qquad \forall\; E,E'\in B
}
where $\gamma=\gamma(\kappa,\beta_0)$. The same argument applies to the other exponents by repeating these considerations for
the powers $\Lambda^p A^{(n)}_x(E)$.  
\end{proof}

It is natural to ask whether $\la_{1}(E)$ remains H\"older if $\la_1(E)-\la_2(E)>0$. In other words, if there is a gap between
the two largest Lyapunov exponents, does it follow that the top exponent is H\"older? The proof we just gave does not show
this, since it relies on statements about $\la_{2,n}(E)$ over different scales~$n$, uniformly in~$E$. Such control can only be obtained
via the AP, at least within the confines of our methods. However, the AP for $\La^2 A^{(n)}_x$  requires a gap between $\la_2$ and~$\la_3$,
and so on. So it is really important for our argument that all exponents are distinct, even to show that the top one is H\"older regular. 

We remark that the proof we just gave is not optimal in some ways. The main improvements one can make relate to various
upper bounds we used, such as~\eqref{crude} where one can insert the Lyapunov exponent~$\la_{1}(E)$ into the exponent, viz. 
\EQ{\label{less crude}
|\la_{1,n}(E) - \la_{1,n}(E')|\le e^{2 \la_{1}(E) n}\,\met(E,E')^{\beta_0},
}
for large $n$; here we are using that $\la_1(E)>0$ which follows from~\eqref{gap}. 
To obtain~\eqref{less crude}, one relies on the upper bound
\[
\sup_{x\in\tor}  \log\|A^{(n)}_x(E)\| \le n\la_{1}(E) + (\log n)^{C_0} 
\]
for large $n$, and analogously
\[
\sup_{x\in\tor} \log\|\Lambda^p A^{(n)}_x(E)\| \le n\sum_{j=1}^p \la_{j}(E) + (\log n)^{C_0} 
\]
provided that $\sum_{j=1}^p \la_{j}(E)>0$. 
Here $C_0$ is a constant that depends on~$\om$, and~$A$.  For a proof, see~\cite[Proposition 4.3]{GS2}. Results of this
type played an important role in the finer results required for the Cantor structure of the spectrum, see~\cite{GS2, GS3}. 

Moreover, they were used by Bourgain in~\cite[Chapter 8]{Bou2} to show that the H\"older exponent does not depend on the Lyapunov exponent (as
long as it is positive) for Schr\"odinger co-cycles.  In addition to these finer upper bounds one also needs to improve on~\eqref{LDT} to 
accomplish this, namely by removing the dependence on the~$\de^2$ on the right-hand side. The appearance of this $\de^2$, too, is closely related to a crude upper bound, see~\eqref{almost inv}. Indeed, it is natural that we may place the Lyapunov exponent on the right-hand side of~\eqref{almost inv}, which then shows
that for $\de\simeq \la_1$ we may put~$\de$ on the right-hand side of~\eqref{LDT} rather than~$\de^2$. This is exactly what is done in~\cite[Chapter 8]{Bou2}. 

However, we do not seem to gain anything from these improvements which is why we have not implemented them in the general $GL(d,\R)$ case. As already
mentioned in Section~\ref{sec:intro2}, for $d>2$ there is no relation between the size of the gaps and the Lyapunov exponents themselves. Thus, 
unlike the $d=2$ case the methods of this paper cannot possibly lead to gap-independent H\"older classes.

\section{Rates of convergence}\label{sec:rates}

As a byproduct of the argument presented in the previous section we obtained the convergence rates~\eqref{crude rate}, viz. 
\EQ{\nn
|\la_{j,n}(E)-\la_{j}(E)| < C(\kappa) \f{\log n}{n}\qquad \forall\; E\in \calE,\; 1\le j\le d
}
for all $n\ge2$. Here $\calE$ is as in Theorem~\ref{thm:la1} (cover the compact space $\calE$ by finitely many open balls~$B$ as in the previous section). 
The purpose of this section is to remove the $\log n$ factor from this rate. This is the analogue of~\cite[Theorem 5.1]{GS} where
the same result was obtained for $SL(2,\R)$ Schr\"odinger co-cycles. 

\begin{prop}
\label{prop:rates} Under the assumptions of Theorem~\ref{thm:la1} one has 
\EQ{\label{good rates}
|\la_{j,n}(E)-\la_{j}(E)| <  \f{C}{n}\qquad \forall\; E\in \calE,\; 1\le j\le d
}
for all $n\ge1$. The constant $C$ depends on $\kappa, A, \calE,\omega$. 
\end{prop}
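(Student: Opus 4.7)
The plan is to bootstrap the crude rate \eqref{crude rate} of order $\log n/n$ down to the sharp rate $1/n$, mirroring \cite[Theorem~5.1]{GS}. The starting point is to sharpen \eqref{GS} using the rate already in hand. Substituting $\la_{1,N}(E)=\la_{1}(E)+O(\log N/N)$ into \eqref{GS} with $N=\lfloor e^{\de_1 n}\rfloor$ yields
\[
|\la_{1}(E)+\la_{1,n}(E)-2\la_{1,2n}(E)|\le C n e^{-c n},\qquad n\ge n_0,\; E\in B.
\]
The decisive gain is that the right-hand side is now \emph{exponentially} small in $n$, not just polynomially.

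Setting $\eps_n:=\la_{1,n}(E)-\la_{1}(E)\ge 0$, this reads $\eps_n=2\eps_{2n}+e_n$ with $|e_n|\le Cne^{-cn}$. A direct induction gives $2^k\eps_{2^k n}=\eps_n-\sum_{j=0}^{k-1}2^{j} e_{2^{j} n}$, and since $|2^{j} e_{2^{j} n}|\le C\cdot 4^{j} n\, e^{-c 2^{j} n}$, the infinite sum converges absolutely with total size $O(n e^{-cn/2})$. With $M=2^k n$, multiplying through by $n$ recasts this as
\[
|M\eps_M-n\eps_n|\le C n^{2} e^{-cn/2},
\]
so $m\mapsto m\eps_m$ is essentially constant along every dyadic chain $\{2^k n\}_{k\ge 0}$.

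It remains only to control $n\eps_n$ at a single initial scale. The crude rate \eqref{crude rate} gives $n_0\eps_{n_0}\le C\log n_0$, a fixed finite number once $n_0$ is chosen. To reach every $M\ge n_0$, I would pick a base scale $n_0'\in[n_0,2n_0]$ with $M=2^k n_0'$ for some integer $k\ge 0$; the bound $n_0'\eps_{n_0'}\le C\log(2 n_0)$ is uniform on this interval, and hence $M\eps_M\le C$ uniformly in $M\ge n_0$. This yields $|\la_{1,n}(E)-\la_{1}(E)|\le C/n$ for $n\ge n_0$, and for $n<n_0$ the inequality is automatic after enlarging $C$. The statement for $\la_j$ with $j\ge 2$ follows by running the same argument for the exterior-power co-cycle $\Lambda^p A^{(n)}_x$, whose top Lyapunov exponent equals $\sum_{i=1}^p\la_{i}(E)$, and then taking differences in $p$. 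Finally, the compact set $\calE$ is covered by finitely many balls $B$ supplied by the proof of Theorem~\ref{thm:la1}.

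The main obstacle is conceptual rather than technical: a priori the recursion $\eps_n=2\eps_{2n}+e_n$ looks unstable, since each step multiplies $\eps$ by $2$. What rescues it is that the counting factor $M=2^k n$ exactly cancels this doubling, leaving $m\eps_m$ invariant up to exponentially small corrections. The argument would collapse if $e_n$ were only polynomially small; it is precisely the exponential smallness, inherited from the analyticity of~$A$ in~$x$ through the LDT, that allows the bootstrap to close.
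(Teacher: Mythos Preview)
Your argument has a genuine gap at the step ``To reach every $M\ge n_0$, I would pick a base scale $n_0'\in[n_0,2n_0]$ with $M=2^k n_0'$ for some integer $k\ge 0$.'' This is false: any odd $M>2n_0$ (or more generally any $M$ whose odd part exceeds $2n_0$) cannot be written this way. Worse, the defect is not cosmetic. The dyadic relation $|\eps_n-2\eps_{2n}|\le Cne^{-cn}$ together with the crude rate $\eps_n\le C\log n/n$ simply does \emph{not} force $n\eps_n$ to be bounded. A clean counterexample: set $\eps_n=(1+\log m)/n$ where $m$ is the odd part of~$n$. Then $\eps_n=2\eps_{2n}$ exactly, $0\le\eps_n\le C\log n/n$, yet $n\eps_n=1+\log m$ is unbounded along odd~$n$. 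Your recursion only links scales within a single dyadic chain $\{2^k n\}_{k\ge0}$, and the odd bases of these chains range over all odd integers; the crude rate at a large odd base only gives $n\eps_n\le C\log n$, which does not close.

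The paper avoids this by running the Avalanche Principle once more, now with \emph{short} blocks of length $\ell\simeq\log n$, simultaneously for $A^{(2n)}_x$, $A^{(n)}_{x+n\omega}$, and $A^{(n)}_x$. Subtracting the three AP identities leaves only the ``seam'' terms at position~$n$, producing
\[
\big|\,2n(\la_{1,2n}-\la_{1,n})-2\ell(\la_{1,2\ell}-\la_{1,\ell})\,\big|<\tfrac{C}{n}.
\]
Setting $R(n)=2n|\la_{1,2n}-\la_{1,n}|$ this reads $R(n)\le R(\ell)+C/n$ with $\ell\simeq\log n$. The crucial point is that this iteration \emph{descends}: starting from any $n$ one reaches a fixed bounded window of scales after $O(\log^* n)$ steps, and the accumulated error $\sum_j C/\ell_j$ is dominated by its last term and hence bounded uniformly in~$n$. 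Then $R(n)\le C$ for all $n$, and telescoping $|\la_{1,n}-\la_1|\le\sum_{k\ge0}|\la_{1,2^{k+1}n}-\la_{1,2^k n}|\le\sum_{k\ge0}C/(2^{k+1}n)=C/n$ finishes. What you are missing is precisely this second, finer-scale invocation of the AP that ties scale $n$ to scale $\log n$; the relation \eqref{GS3} you start from links $n$ only to $2n$ and cannot by itself bridge distinct dyadic chains.
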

\begin{proof}
We write,
\[
A^{(2n)}_x(E) = A^{(n)}_{x+n\om}(E) \circ A^{(n)}_x(E)
\]
The idea is now to apply the AP to the three matrix products $A^{(2n)}_x(E)$, $A^{(n)}_{x+n\om}(E)$, and $A^{(n)}_x(E)$
with factors of the form $ A^{(\ell_j)}_{x+m_j\om}(E)$. Here $n$ is large and 
$$
C_1\log n \le \ell_j \le 2C_1\log n, \quad m_j = \sum_{i=1}^{j-1} \ell_i
$$
for some large $C_1$. This can be justified as in Section~\ref{sec:proof} for all  $x\in \B_n^c$ where $|\B_n|< n^{-10}$, say (taking $C_1$ large). 
Subtracting the resulting representation~\eqref{eq:AP} for $\log\|A^{(n)}_{x+n\om}(E)\|$ and $\log \|A^{(n)}_x(E)\|$ from that for $\log \|A^{(2n)}_x(E)\|$
yields the estimate
\EQ{
\label{eq:2scales}
& \big| \log \|A^{(2n)}_x(E)\| - \log\|A^{(n)}_{x+n\om}(E)\| - \log \|A^{(n)}_x(E)\| \\
&- \log \|A^{(2\ell)}_{x+(n-\ell)\om}(E)\| + \log\|A^{(\ell)}_{x+n\om}(E)\| + \log \|A^{(\ell)}_{x+(n-\ell)\om}(E)\| \; \big| < n^{-1}
}
for all $x\in \B_n^c$ and some $\ell\simeq \log n$. The power $n^{-1}$ can be improved on the right-hand side but this is of no consequence. 
Integrating~\eqref{eq:2scales} over $\tor$ and noting that the integral over $\B_n$ makes a negligible contribution yields (dropping $E$ for simplicity)
\EQ{\nn
& \big| 2n (\la_{1,2n} - \la_{1,n})    -   2\ell(\la_{1,2\ell}-\la_{1,\ell})   \big| < Cn^{-1}
}
Setting $R(n):= 2n |\la_{1,2n} - \la_{1,n}| $ we infer that
\[
R(n) \le R(\ell) + \frac{C}{n}
\]
Iterating and summing yields $R(n)\le C$ for all $n\ge1$ as claimed.  Applying the exact same reasoning to the exterior
powers $\Lambda^p A$ yields the analogous estimate for the other exponents. 
\end{proof}

We now turn to the question of whether \eqref{good rates} is an optimal estimate. As the example of constant $A$ shows,
this of course need not be the case. However, based on the observation from Section~\ref{sec:proof} that 
 the  quantities $\la_{j,2\ell}(E)-\la_j(E)$ and $\la_{j,\ell}(E)-\la_{j,2\ell}(E)$ differ only by an amount that is {\em exponentially small in~$\ell$},
 we now establish the following dichotomy: either \eqref{good rates} is optimal, or the convergence is {\em exponentially fast}. 

\begin{cor}\label{cor:geometric} 
Under the assumptions of Theorem~\ref{thm:la1} there exists a constant $c_1=c_1(\kappa,A,\calE,\omega)>0$ such that 
\EQ{\label{exp} |\la_{j}(E)- 2\la_{j,2\ell}(E)+\la_{j,\ell}(E)|\le \exp(-c_1\ell) 
} 
for all sufficiently large~$\ell$ and all $1\le j\le d$. 
Moreover, there is $\ell_0=\ell_0(c_1)$ so that if $$|\la_{j,\ell_1}(E)-\la_j(E)|>4\exp(-c_1\ell_1)$$ for some $\ell_1\ge\ell_0$ and some $j$, then
\[
|\la_{j,2^k\ell_1}(E)-\la_j(E)|>\frac{1}{2^{k+1}}|\la_{j,\ell_1}(E)-\la_{j}(E)|
\] for all $k\ge 0$. In other words, on balls $B$ on which~\eqref{gap} holds, we have the following property: for each $E\in B$,  either $\la_{j,n}(E)\to \la_j(E)$ exponentially fast, or 
\EQ{\label{1overn}
|\la_{j,n}(E)-\la_j(E)|>\frac{C(E)}{n}
}
 for infinitely many~$n$. 
\end{cor}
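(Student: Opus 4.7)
The plan is to extract \eqref{exp} from the avalanche-principle identity already used in Section~\ref{sec:proof}, and then to derive the inheritance inequality and the dichotomy from it by a purely elementary iteration.

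First I would prove \eqref{exp}, which is the real content of the corollary. Fix $E\in B$ (on a ball where the gap condition persists) and fix an index $1\le j\le d$. The AP-based identity \eqref{GS}, which applies uniformly to every exponent and every $E\in B$ by the argument of Section~\ref{sec:proof}, reads
\EQ{\nn
\big|\la_{j,N}(E)+\la_{j,\ell}(E)-2\la_{j,2\ell}(E)\big|\le C(\kappa)\,\f{\log N}{N},
}
where $N=\lfloor e^{\de_1\ell}\rfloor$ and $\de_1=\de_1(\kappa)>0$. Combining this with Proposition~\ref{prop:rates}, which gives $|\la_{j,N}(E)-\la_j(E)|\le C/N$, yields
\EQ{\nn
\big|\la_j(E)-2\la_{j,2\ell}(E)+\la_{j,\ell}(E)\big|\le C(\kappa)\,\ell\, e^{-\de_1\ell}.
}
Choosing any $c_1<\de_1$ and absorbing the polynomial factor, the right-hand side is bounded by $e^{-c_1\ell}$ for $\ell$ large enough, uniformly in $E\in B$. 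This proves \eqref{exp}.

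Next I would prove the inheritance claim. Write $\e_n:=\la_{j,n}(E)-\la_j(E)$. Rearranging \eqref{exp} gives
\EQ{\nn
\e_{2\ell}=\tfrac12\e_\ell+r_\ell,\qquad |r_\ell|\le \tfrac12 e^{-c_1\ell}.
}
Iterating this one-step recursion from $\ell=\ell_1$ through $\ell_1,2\ell_1,\ldots,2^{k-1}\ell_1$ gives the telescopic representation
\EQ{\nn
\e_{2^k\ell_1}=2^{-k}\e_{\ell_1}+\sum_{i=0}^{k-1}2^{i-k}r_{2^i\ell_1}.
}
Because $|r_{2^i\ell_1}|\le \tfrac12 e^{-c_1 2^i\ell_1}$, the ratio of consecutive terms in the error sum $\sum_{i=0}^{k-1}2^i e^{-c_1 2^i\ell_1}$ is at most $2 e^{-c_1 \ell_1}$, which is $\le 1/2$ once $\ell_1\ge\ell_0$ for a suitable $\ell_0=\ell_0(c_1)$; hence the whole sum is geometric and bounded by (a small multiple of) $e^{-c_1\ell_1}$. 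Plugging this back in yields
\EQ{\nn
\big|\e_{2^k\ell_1}-2^{-k}\e_{\ell_1}\big|\le \f{C}{2^k}\,e^{-c_1\ell_1}
}
with an absolute $C$. Under the hypothesis $|\e_{\ell_1}|>4e^{-c_1\ell_1}$ (and with $\ell_0$ chosen so that $C e^{-c_1\ell_1}<|\e_{\ell_1}|/2$), the triangle inequality forces
\EQ{\nn
|\e_{2^k\ell_1}|\ge 2^{-k}|\e_{\ell_1}|-2^{-k}\cdot\tfrac12|\e_{\ell_1}|>\tfrac{1}{2^{k+1}}|\e_{\ell_1}|,
}
which is the second assertion of the corollary.

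Finally, the dichotomy follows by contrapositive. Either there is some $\ell_0'\ge \ell_0$ with $|\e_\ell|\le 4e^{-c_1\ell}$ for all $\ell\ge\ell_0'$, in which case $\la_{j,n}(E)\to\la_j(E)$ exponentially; or else there is a sequence $\ell_1<\ell_2<\cdots\to\infty$ (with $\ell_m\ge\ell_0$) along which $|\e_{\ell_m}|>4e^{-c_1\ell_m}$. Fixing any one such $\ell_m$ and applying the inheritance inequality just proved, we obtain on the geometric progression $n=2^k\ell_m$ that
\EQ{\nn
n\,|\e_n|=2^k\ell_m\,|\e_{2^k\ell_m}|>\tfrac12\,\ell_m\,|\e_{\ell_m}|=:C(E),
}
so that \eqref{1overn} holds for infinitely many $n$.

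The main obstacle is the first step: running the AP identity \eqref{GS} \emph{uniformly} in $E\in B$ and for $N$ as large as $e^{\de_1\ell}$, combined with the sharpened rate $C/N$ (rather than $C\log N/N$) from Proposition~\ref{prop:rates}. Once \eqref{exp} is in hand, the remainder is a clean elementary iteration, with the only delicacy being to choose $\ell_0$ large enough that the geometric error series is bounded by a factor significantly smaller than the hypothesized lower bound on $|\e_{\ell_1}|$, which is exactly what the constant $4$ in the statement is designed to accommodate.
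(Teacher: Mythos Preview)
Your proposal is correct and follows essentially the same route as the paper. The only cosmetic difference is in the first step: the paper observes that \eqref{exp} is literally \eqref{GS3} (where $\la_{j,N}$ has already been replaced by $\la_j$ via the rate \eqref{crude rate}), so your separate appeal to Proposition~\ref{prop:rates} is a valid but slightly redundant alternative; the inheritance inequality and the dichotomy are then proved by the same elementary induction/telescoping you wrote out (your telescoping coefficient should read $2^{\,i-k+1}$ rather than $2^{\,i-k}$, but this is absorbed into your constant~$C$ and does not affect the argument).
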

\begin{proof} 
The relation \eqref{exp} is a restatement of \eqref{GS3}. 

Now assume that $\la_{j,\ell_1}(E)-\la_j(E)>4\exp(-c_1\ell_1)$ where $j$ is fixed. In view of \eqref{exp},
\[ |\la_{j,2\ell_1}(E)-\la_j(E)| > \f12 |\la_{j,\ell_1}(E)-\la_j(E)|-\f 12 \exp(-c_1\ell_1).\]
Continuing inductively one obtains that
\EQ{\label{bremse} |\la_{j,2^k\ell_1}(E)-\la_j(E)| &> \frac{1}{2^k}|\la_{j,\ell_1}(E)-\la_j(E)| \\
& -\frac{1}{2^k}\exp(-c_1\ell_1)\Bigl[1+2\exp\bigl(-(2-1)c_1\ell_1\bigr) +\ldots \\
& + 2^{k-1}\exp\bigl(-(2^{k-1}-1)c_1\ell_1\bigr)\Bigr].
}
Now choose $\ell_0$ so large that
\[ \sum_{j=0}^\infty 2^j\exp\bigl(-(2^j-1)c_1\ell_0\bigr)\le 2.\]
By \eqref{bremse} and our assumption,
\EQ{\nn
| \la_{j,2^k\ell_1}(E)-L(E)| &> \frac{1}{2^k}|\la_{j,\ell_1}(E)-\la_j(E)|-\frac{2}{2^k}\exp(-c_1\ell_1)\\ &>\frac{1}{2^{k+1}}|\la_{j,\ell_1}(E)-\la_j(E)|
 }
as claimed. 
\end{proof}

\section{Other types of shifts}
\label{sec:final}

As already noted in Section~\ref{sec:intro2}, the proof is modular and rests on two main ingredients, the avalanche principle
on the one hand, and the large deviation theorems on the other hand. In fact,  the proof in Section~\ref{sec:proof} applies to 
any $GL(d,K)$ cocycle over an "abstract" base dynamics as in Section~\ref{sec:intro} $\theta:X\to X$ provided one has an LDT
\EQ{\label{XLDT}
m\big(\big \{x\in X \;\big|\;  \big | n^{-1}\log \|\La^p A_x^{(n)}(E)\| - \sum_{i=1}^p \la_{i,n}(E) \big | >  \delta \big\}\big)  <\exp( -c(\de) n )
}
for all $\de>0$ and all sufficiently large $n\ge n_0(\de)$ with some constant $c(\de)>0$.   

In fact, a weaker statement such as 
\EQ{\label{weakLDT}
m\big( \big \{x\in X \;\big|\;  \big | n^{-1}\log \|\La^p A_x^{(n)}(E)\| - \sum_{i=1}^p \la_{i,n}(E) \big | >  \delta \big\} \big) <\exp( - n^\tau )
}
for all $\de>0$, $n\ge n_0(\de)$ and some $\tau=\tau(\delta)>0$ would still lead to a result. But instead of H\"older regularity one
obtains a modulus of the type~\eqref{weaker mod}.  For example, instead of~\eqref{exp} we only obtain
\[
|\la_{j}(E)- 2\la_{j,2\ell}(E)+\la_{j,\ell}(E)|\le \exp(-\ell^\sigma),\qquad 0<\sigma<1
\]
under the assumption~\eqref{weakLDT}. In the iteration which underlies the proof of Theorem~\ref{thm:la1} given in Section~\ref{sec:proof} we can therefore
only pass from scale $n$ to $N=\exp(n^\beta)$ for some $0<\beta<1$.  The details are routine, and are left to the reader. 

Examples of dynamics that give rise to LDTs of the form~\eqref{weakLDT} are shifts on higher tori~$\tor^\nu$ as well as those on~$\tor$
for which~\eqref{Dio} is relaxed such as to
\[
\| n\om\| \ge c(\om)\, n^{-a} \qquad\forall\; n\ge1
\]
where $a>1$ is arbitrary but fixed. See for example~\cite[Proposition 10.2]{GS}, \cite[Proposition 7.18]{Bou2} for  precise 
statements along these lines.  It is not known if these weaker results can be improved or not; in other words, for higher-dimensional tori
we currently do not have an estimate such as~\eqref{XLDT}. 

\section{Products of random matrices}
\label{sec:random}
 
For the sake of completeness, and less to say anything new, we now sketch how the machinery of this note relates to the classical theory
of  products of random matrices. A standard reference for everything that we will need is the book by Bougerol, Lacroix~\cite{BL}; see
also Le Page~\cite{Lepage}, and Ledrappier's lecture notes~\cite{Led}. 

Let $\mu$ be a probability measure on~$GL(d,\R)$ and consider i.i.d.~variables $\{Y_j\}_{j=1}^\I$ with common distribution~$\mu$ such that $\Erw[\log^+ \|Y_1\|]<\I$. We assume that
$\mu$ is {\em strongly irreducible}, i.e., there is no finite union of proper subspaces of~$\R^d$ which is invariant under every matrix in~$T_\mu$, the 
smallest closed semigroup containing the support of~$\mu$. Furthermore, we assume that $\mu$ is contracting, i.e., there exists a sequence $\{M_j\}_{j}$
in~$T_\mu$ such that $\|M_j\|^{-1} M_j$ converges to a  rank-$1$ matrix. 

F\"urstenberg's theorem~\cite{Fur}, says that for $d=2$ the co-cycle generated by the random sequence $\{Y_j\}\subset SL(2,\R)$ has Lyapunov exponents $\la_1>0>\la_2=-\la_1$.
Moreover, there exists a unique $\mu$-invariant probability measure on~$P\R^1$, denoted by~$\nu$,  such that 
\[
\la_1 = \int_{P\R^1}\int_{SL(2,\R)} \log \| M\cdot \hat x\| \, \mu(dM)\, \nu(d\hat{x}) 
\]
where $M\cdot\hat{x}$ is the direction of $Mx$ where $x$ belongs to the equivalence class~$\hat{x}$.  
If these conditions are valid for all exterior powers $\Lambda^p Y_1$,  a theorem of Guivarc'h and Raugi, see~\cite[page 78]{BL}, extends F\"urstenberg's result to $d>2$ ensuring that all exponents are distinct. 

By a theorem of Le Page~\cite{Lepage}, we further know that one has exponential convergence to the invariant measure~$\nu$, see \cite[page 106]{BL}
for the precise meaning.  Finally, assuming that $\mu$ has exponential moments, one has a large deviation estimate of the following form: for every $\de>0$
\[
\limsup_{n\to\I} n^{-1} \log \Prob\Big(|\log \|Y_nY_{n-1}\cdots Y_2 Y_1\| - n\la_1|> n\de\Big) < -c(\de)<0
\]
see \cite[Theoreme 7]{Lepage}, or \cite[page 131]{BL}.  We therefore have all ingredients in order to apply the exact same argument as above.
Starting from the case where the random co-cycle does not depend on a parameter, we conclude for example that Corollary~\ref{cor:geometric} holds (with $E$ fixed).  Thus, one either has exponential convergence of the Lyapunov exponents, or~\eqref{1overn} holds. However, by the aforementioned convergence result
of Le Page it follows that~\eqref{1overn} does not occur in the random case and one always has exponential convergence. In particular, this logic shows that
the statement of
Corollary~\ref{cor:geometric} {\em is sharp}.

For random co-cycles depending ``nicely" on a parameter, we may again conclude that the exponents are H\"older in the parameter. 
However, we see no need to make this precise; indeed, if the dynamics is random or strongly mixing then we might expect much better regularity of the
exponents. At least for Schr\"odinger co-cycles this is indeed the case, see Campanino, Klein~\cite{CamKl} and Simon, Taylor~\cite{SiTa}. 
The approach we followed here is clearly not able to capture results of that strength; conversely, the techniques in these papers
 cannot handle deterministic dynamics such as shifts.

\vspace{1cm}

\centerline{\scshape W. Schlag}
\medskip
{\footnotesize
  \centerline{Department of Mathematics, The University of Chicago}
\centerline{5734 South University Avenue, Chicago, IL 60615, U.S.A.}
\centerline{\email{schlag@math.uchicago.edu}}
}

\end{document}